%
%
%
%
%
%
%
%
%

\documentclass[12pt]{amsart}
\usepackage{graphics,amsmath,amsfonts,amssymb,amscd,mathrsfs,pstricks,pst-plot}


\textwidth = 157mm
\textheight = 239mm
\evensidemargin=0mm
\oddsidemargin=0mm
\hoffset=4mm
\voffset=-25mm
\parskip =0.5mm
\parindent = 6mm
\linespread{1.07}
\pagestyle{plain}

\input{xy}
\xyoption{all}

\parskip=\smallskipamount

\newtheorem{theorem}{Theorem}[section]
\newtheorem{lemma}[theorem]{Lemma}

\newtheorem{proposition}[theorem]{Proposition}

\theoremstyle{definition}

\newtheorem{problem}[theorem]{Problem}

\definecolor{myblue}{rgb}{0.66,0.78,1.00}

\newcommand\hra{\hookrightarrow}

\newcommand{\cC}{\mathscr{C}}

\newcommand{\cO}{\mathscr{O}}

\newcommand{\C}{\mathbb{C}}

\newcommand{\N}{\mathbb{N}}

\newcommand{\Z}{\mathbb{Z}}
\renewcommand{\P}{\mathbb{P}}
\newcommand{\R}{\mathbb{R}}

\def\I{\mathrm{i}}

\newcommand\wt{\widetilde}
\newcommand{\Aut}{\mathop{{\rm Aut}}}

\numberwithin{equation}{section}

%
%
%
%
%

\begin{document}
\title
{Fatou-Bieberbach domains in $\C^n\setminus\R^k$} 
\author{Franc Forstneri\v{c}}
\author{Erlend Forn\ae ss Wold}
\address{F.\ Forstneri\v c, Institute of Mathematics, Physics and Mechanics, 
University of Ljubljana, Jadranska 19, 1000 Ljubljana, Slovenia}
\email{franc.forstneric@fmf.uni-lj.si}
\address{E.\ F. Wold, Matematisk Institutt, Universitetet i Oslo,
Postboks 1053 Blindern, 0316 Oslo, Norway}
\email{erlendfw@math.uio.no}
%
%
\subjclass[2010]{Primary 32E10, 32E20, 32E30, 32H02.  Secondary 32Q99.}
\keywords{Oka principle, holomorphic flexibility, Oka manifold, polynomial convexity}
\date{\today}

\begin{abstract}
We construct Fatou-Bieberbach domains in $\C^n$ for $n>1$ which contain a given compact set $K$ and at the same time avoid a totally real affine subspace $L$ of dimension $<n$, provided that $K\cup L$ is polynomially convex. By using this result, we show that the domain $\C^n\setminus\R^k$ for $1\le k<n$ enjoys the basic Oka property with approximation for maps from any Stein manifold of dimension $<n$. 
\end{abstract}
\maketitle

%
\section{Introduction}
\label{sec:Intro}
A proper subdomain $\Omega$ of a complex Euclidean space $\C^n$ is called a {\em Fatou-Bieberbach domain} if $\Omega$ is biholomorphic to $\C^n$.  
Such domains abound in $\C^n$ for any $n>1$; a survey can be found in \cite[Chap.\ 4]{F2011}. For example, an attracting basin of a holomorphic automorphism of $\C^n$ is either all of $\C^n$ or a Fatou-Bieberbach domain (cf.\ \cite[Appendix]{RR1988} or \cite[Theorem 4.3.2]{F2011}). Fatou-Bieberbach domains also arise as regions of convergence of sequences of compositions $\Phi_k=\phi_k\circ\phi_{k-1} \circ\cdots \circ\phi_1$, where each $\phi_j\in\Aut\C^n$ is sufficiently close to the identity map on a certain compact set $K_j\subset \C^n$ and the sets $K_j\subset \mathring K_{j+1}$ exhaust $\C^n$;  this is the so called {\em push-out method} (cf.\ \cite[Corollary 4.4.2, p.\ 115]{F2011}). 

Besides their intrinsic interest, Fatou-Bieberbach domains are very useful in constructions of holomorphic maps. 
An important question is which pairs of disjoint closed subsets $K,L\subset \C^n$ of a Euclidean space 
of dimension $n>1$ can be separated by a Fatou-Bieberbach domain $\Omega$, in the sense that $\Omega$ contains one of the sets and is disjoint from the other one. Recently it was shown by Forstneri\v c and Ritter \cite{F-R} that this holds if the union $K\cup L$ is a compact polynomially convex set and one of the two sets is convex or, more generally, holomorphically contractible. They applied this to the construction of proper holomorphic maps of Stein manifolds of dimension $<n$ to $\C^n$ whose images avoid a given compact convex set. 

In this paper we prove the following similar result in the case when $L$ is an affine totally real subspace of 
$\C^n$ of dimension $<n$.

%
%
%
%
\begin{theorem}
\label{th:FB}
Let $n>1$. Assume that $L$ is a totally real affine subspace of $\C^n$ of dimension $\dim_\R L<n$ and $K$ is a compact subset of $\C^n\setminus L$. If $K\cup L$ is polynomially convex, then there exists a Fatou-Bieberbach domain $\Omega \subset \C^n$ with $K\subset \Omega\subset \C^n\setminus L$.
\end{theorem}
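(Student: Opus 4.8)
The plan is to realize $\Omega$ as the basin of a non-autonomous sequence of automorphisms via the push-out method, arranging that the orbit of $K$ stays bounded while that of $L$ escapes to infinity. First I would normalize: after an affine automorphism of $\C^n$ we may assume $L = \R^k \times \{0\} \subset \C^k \times \C^{n-k} = \C^n$, so that in particular $L \subset \{z_n = 0\}$ and there is a free complex coordinate $z_n$. Writing $\pi\colon \C^n \to \C^{n-1}$ for the projection forgetting $z_n$, the idea is to use shears in the $z_n$-direction, globalized by Anders\'en--Lempert theory, to translate compact pieces of $L$ far away in the $z_n$-coordinate while leaving $K$ essentially fixed.

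The key inductive lemma I would establish is: given $R, M, \e > 0$ and a compact polynomially convex set $S \supset K$ on which we must stay close to the identity, there is $\phi \in \Aut\C^n$ with $\|\phi - \mathrm{id}\|_S < \e$ and $|\phi(z)| > M$ for all $z \in L \cap \bar B_R$, and moreover such that the image $\phi\bigl(S \cup (L \cap \bar B_R)\bigr)$ remains polynomially convex. To produce $\phi$, note that $K \cup (L \cap \bar B_R)$ is polynomially convex (here $L \cap \bar B_R$ is convex, and one combines this with the hypothesis that $K \cup L$ is polynomially convex); hence by the Oka--Weil theorem there is an entire function $h$ with $h \approx 0$ on a neighborhood of $S$ and $h \approx 1$ on $L \cap \bar B_R$. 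The isotopy $\phi_t(z) = z + tM\, h(z)\, e_n$ fixes $S$ to first order and shifts $L \cap \bar B_R$ by $\approx tM$ in the $z_n$-direction; since the $L$-piece is totally real and is merely translated, each $\phi_t$-image stays polynomially convex, and the Anders\'en--Lempert theorem (in the Forstneri\v{c}--Rosay form) lets me approximate $\phi_1$ uniformly on $S \cup (L\cap \bar B_R)$ by a genuine automorphism $\phi$.

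With this lemma in hand I would run the push-out recursion. Choosing radii $r_j \uparrow \infty$ and balls $B_{r_j} \supset \Phi_{j-1}(K)$, I apply the lemma with $S$ a polynomially convex compact neighborhood of $\Phi_{j-1}(K)$ together with the already-expelled pieces, with $R = r_j$ and $M = M_j \uparrow \infty$, obtaining $\phi_j$ and setting $\Phi_j := \phi_j \circ \cdots \circ \phi_1$. Keeping the successive errors $\e_j$ summable and decaying fast relative to the $r_j$ guarantees, by the standard push-out estimates (cf.\ \cite[Corollary 4.4.2]{F2011}), that $\Phi_j$ converges uniformly on compacts of a domain $\Omega$ to a biholomorphism $\Omega \to \C^n$, that $K \subset \Omega$ (its orbit is Cauchy), and that every $p \in L$ is eventually pushed beyond radius $r_j$ and then kept escaping by the later near-identity maps, so $p \notin \Omega$; thus $K \subset \Omega \subset \C^n \setminus L$.

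The main obstacle is the preservation of polynomial convexity throughout: both the claim that $K \cup (L \cap \bar B_R)$ (and the enlarged sets $S$ carrying the previously expelled pieces) are polynomially convex, and the claim that these remain so along the translating isotopy $\phi_t$, which is exactly what Anders\'en--Lempert requires. Controlling this is delicate precisely because $L$ is noncompact, so the expelled pieces accumulate and one must verify that they neither destroy polynomial convexity nor drift back into $B_{r_j}$ at later stages; a careful choice of the parameters $r_j, M_j, \e_j$ is what makes the bookkeeping close.
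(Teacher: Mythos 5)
Your overall architecture (the push-out method: compose automorphisms that approximate the identity on $K$ and expel $L$, then take the basin) is the same as the paper's, but the way you produce each automorphism has a gap that is precisely the main difficulty of the theorem: you never control $\phi$ on the unbounded part $L\setminus \bar B_R$. The entire function $h$ you obtain from Oka--Weil is only close to $0$ on $S$ and close to $1$ on the compact set $L\cap\bar B_R$; on the rest of $L$ it is completely uncontrolled (and generically unbounded), so the shear $z\mapsto z+tMh(z)e_n$ and its Anders\'en--Lempert approximation scramble $L\setminus\bar B_R$ arbitrarily. Consequently: (i) at the next stage the set to be expelled is a piece of $\Psi_{j-1}(L)$, which is no longer an affine subspace, need not meet $\bar B_{r_j}$ in anything tame, and there is no reason for the unions you need to be polynomially convex; (ii) points of $L$ just outside $\bar B_{r_j}$ may be thrown back into the balls, or into the region where all later maps are near the identity, so the final claim that every $p\in L$ escapes --- hence $L\cap\Omega=\emptyset$ --- does not follow. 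Your closing paragraph concedes the bookkeeping is ``delicate,'' but no choice of $r_j$, $M_j$, $\e_j$ repairs this: the problem is the absence of any estimate on $\phi_j$ along the noncompact part of $L$, not the size of the errors on the compact parts.

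This is exactly why the paper replaces Oka--Weil and Anders\'en--Lempert approximation on compacts by the Carleman-type theorem of Kutzschebauch and Wold (Theorem \ref{th:KW2014}): one needs an automorphism approximating, in the \emph{fine Whitney topology on all of the unbounded manifold} $L$, a prescribed isotopy of $L$ that is the identity near infinity; only then is $\Phi(L)$ globally located, so that $\Phi(L)\cap M=\emptyset$ and $\Phi(L)\cup M$ polynomially convex can be arranged for the whole of $L$. The paper must also construct that isotopy globally: it flows $L$ out of a ball along the (cut-off) gradient of a strongly plurisubharmonic exhaustion, using $\dim_\R L<n$ and the bound on the Morse indices to avoid critical points, corrects it by Gromov's h-principle to keep it totally real, and perturbs it (L\o w--Wold, Proposition 4.1 of \cite{KW2014}) so that $K\cup L_t$ stays polynomially convex for every $t$. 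A secondary unproved point in your argument: the assertion that $S\cup\phi_t(L\cap\bar B_R)$ remains polynomially convex for all intermediate $t$ merely because the piece is ``merely translated'' also requires justification, since disjointness of a convex compact set from a polynomially convex compact set does not by itself make their union polynomially convex.
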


Theorem \ref{th:FB} is proved in \S \ref{sec:FB}. 

Concerning the hypotheses on $K\cup L$ in the theorem, recall that if $L$ is a closed unbounded subset of $\C^n$ (for example, a totally real affine subspace) and $K$ is a compact subset of $\C^n$, we say that $K\cup L$ is polynomially convex if $K\cup L'$ is such for every compact subset $L'$ of $L$. For results on such totally real sets see for example the paper \cite{L-W2009}.

Let $z=(z_1,\ldots,z_n)$, with $z_j=x_j+\I y_j$ and $\I=\sqrt{-1}$, denote the complex coordinates on $\C^n$. Any totally real affine subspace $L\subset\C^n$ of dimension $k\in \{1,\ldots,n\}$ can be mapped by an affine holomorphic automorphism of $\C^n$ onto the standard totally real subspace   
\begin{equation}\label{eq:Rk} 	
	\R^k = \{(z_1,\ldots, z_n) \in\C^n \colon y_1=\cdots = y_k=0,\ z_{k+1}= \cdots =z_n=0\}.
\end{equation}

The conclusion of Theorem \ref{th:FB} is false in general if $\dim L=n$ (the maximal possible dimension of a totally real submanifold of $\C^n$). An example is obtained by taking $L=\R^2\subset \C^2$ and $K$ the unit circle in the imaginary subspace $\I \R^2\subset \C^2$. The reason for the failure in this example is topological: the circle links
the real subspace $\R^2\subset \C^2$ 
(in the sense that it represents a nontrivial element of the fundamental group 
$\pi_1(\C^2\setminus \R^2)=\pi_1(\mathbb S^1)=\Z$), but  it would be contractibe to a point in any 
Fatou-Bieberbach domain containing it; hence such a domain can not exist. 
Such linking is impossible if $\dim_\R L<n$ since the complement $\C^n\setminus K$ of any compact polynomially convex set $K\subset \C^n$ is topologically a CW complex containing only cells of dimension $\ge n$, and hence it has vanishing homotopy groups up to dimension $n-1$ (cf.\ \cite{F1994,Hamm} or \cite[\S 3.11]{F2011}). 
Thus $L$ can be removed to infinity in the complement of $K$, so there is no linking.
(Another argument to this effect, using the flow of a certain vector field on $\C^n$, 
is given in the proof of Theorem \ref{th:FB} in Sec.\ \ref{sec:FB} below.)

In spite of this failure of Theorem \ref{th:FB} for $k=n$, $\C^n\setminus \R^n$ is known to be a union of Fatou-Bieberbach domains for any $n>1$; see Rosay and Rudin \cite{RR1988} or \cite[Example 4.3.10, p.\ 112]{F2011}. Therefore, the following seems a natural question.

\begin{problem}
Assume that $n>1$ and $K$ is a compact set in $\C^n\setminus\R^n$ such that $K\cup\R^n$ is polynomially convex and $K$ is contractible to a point in $\C^n\setminus \R^n$. Does there exist a Fatou-Bieberbach domain $\Omega$ in $\C^n$ satisfying $K\subset \Omega\subset \C^n\setminus \R^n$?
\end{problem}

Recall that a compact set $E$ in a complex manifold $X$ is said to be {\em $\cO(X)$-convex}
if for every point $p\in X\setminus E$ there exists a holomorphic function $f\in \cO(X)$ satisfying 
$|f(p)|>\sup_E |f|$. 

We shall apply Theorem \ref{th:FB} to prove the following second main result of the paper.

\begin{theorem} \label{th:main}
Assume that $L$ is an affine totally real subspace of $\C^n$ $(n>1)$ with $\dim_\R L<n$, 
$X$ is a Stein manifold with $\dim_\C X<n$, $E \subset X$ is a compact $\cO(X)$-convex set, 
$U\subset X$ is an open set containing $E$, and $f\colon U \to\C^n$ is a holomorphic map such that 
$f(E)\cap L=\emptyset$. Then $f$ can be approximated uniformly on $E$ by holomorphic maps 
$X\to \C^n\setminus L$.
\end{theorem}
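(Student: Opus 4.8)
The plan is to deduce Theorem~\ref{th:main} from Theorem~\ref{th:FB} by combining a Fatou–Bieberbach exhaustion of $\C^n\setminus L$ with the Oka property of $\C^n$ itself. The key structural observation is the following: by Theorem~\ref{th:FB}, the set $f(E)$ (which is compact and disjoint from $L$) can be enclosed in a Fatou-Bieberbach domain $\Omega\subset\C^n\setminus L$, provided we first arrange that $f(E)\cup L$ is polynomially convex. Since $\Omega$ is biholomorphic to $\C^n$, it is an Oka manifold, and so standard Oka theory will produce a global holomorphic map $X\to\Omega\subset\C^n\setminus L$ approximating $f$ on $E$.

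First I would reduce to the standard model. By the affine automorphism remarked after \eqref{eq:Rk}, I may assume $L=\R^k$ with $k<n$. The first genuine step is a polynomial-convexity adjustment: the hypothesis gives only $f(E)\cap L=\emptyset$, not polynomial convexity of $f(E)\cup L$. Here I would exploit that $E$ is $\cO(X)$-convex in the Stein manifold $X$ and $\dim_\C X<n$. A general position / approximation argument (moving $f$ slightly, using that the expected dimension of the image is low relative to $n$) should let me replace $f$ by a nearby holomorphic map on a neighborhood of $E$ whose image, together with $L$, is polynomially convex; the low dimension $\dim_\C X<n$ and $\dim_\R L<n$ is exactly what prevents unavoidable intersections and linking, mirroring the topological discussion after Theorem~\ref{th:FB}.

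Next, with $K:=f(E)$ (or a polynomially convex thickening) and $L$ now satisfying the hypotheses of Theorem~\ref{th:FB}, I would invoke that theorem to obtain a Fatou–Bieberbach domain $\Omega$ with $K\subset\Omega\subset\C^n\setminus L$. Let $\Phi\colon\Omega\xrightarrow{\sim}\C^n$ be a biholomorphism. Then $\Phi\circ f\colon U\to\C^n$ is holomorphic near $E$, and since $X$ is Stein, $E$ is $\cO(X)$-convex, and $\C^n$ is Oka (indeed, $\C^n$ satisfies the Oka–Weil approximation theorem directly, so one does not even need the full machinery), I can approximate $\Phi\circ f$ uniformly on $E$ by a \emph{global} holomorphic map $g\colon X\to\C^n$. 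Composing back, $\Phi^{-1}\circ g\colon X\to\Omega\subset\C^n\setminus L$ is a global holomorphic map approximating $f$ on $E$, which is the desired conclusion. The uniform approximation of $\Phi\circ f$ by $g$ transfers to approximation of $f$ by $\Phi^{-1}\circ g$ because $\Phi^{-1}$ is continuous and we are working on the compact set $E$.

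The main obstacle I expect is the first step, securing polynomial convexity of $f(E)\cup L$ after a small perturbation of $f$, since Theorem~\ref{th:FB} requires it as an input while Theorem~\ref{th:main} only assumes the separation $f(E)\cap L=\emptyset$. This is where the dimension hypothesis $\dim_\C X<n$ must be used essentially: one wants a general-position argument (in the spirit of results on totally real sets such as \cite{L-W2009}) ensuring that a generic small holomorphic perturbation of $f$ has image in sufficiently general position with respect to $\R^k$ that no polynomially convex obstruction arises. By contrast, the final Oka/Oka–Weil approximation step is routine once $\Omega$ is in hand, because biholomorphic invariance reduces it to the classical Oka–Weil theorem on the $\cO(X)$-convex set $E$.
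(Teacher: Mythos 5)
Your overall skeleton --- reduce to $L=\R^k$, arrange polynomial convexity of (an enlargement of) $f(E)$ together with $L$, invoke Theorem~\ref{th:FB} to get a Fatou--Bieberbach domain $\Omega$ with $f(E)\subset\Omega\subset\C^n\setminus L$, and finish by Oka--Weil approximation of $\Phi\circ f$ inside $\Omega\cong\C^n$ --- matches the paper's route, and your final step is exactly how the paper deduces Theorem~\ref{th:main} from Proposition~\ref{prop:FB}. However, the step you yourself flag as the main obstacle is a genuine gap, and the mechanism you propose for it does not work. Polynomial convexity of $f(E)\cup L$ is not a general-position property: a small generic perturbation of $f$ controls transversality and linking, but it has no effect on the polynomial hull of the image. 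The hull $\widehat{f(E)}$ can be much larger than $f(E)$ (it typically contains analytic structure attached to $f(E)$), it can meet $L$, and nothing in a Thom-type argument rules this out; the dimension hypotheses $\dim_\C X<n$ and $\dim_\R L<n$ give you transversality for free (as the paper notes, the case $\dim_\R X+\dim_\R L<2n$ is trivial), so if general position sufficed there would be nothing left to prove in the first nontrivial case $\dim_\C X=\dim_\R L=2$, $n=3$.

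What the paper actually does at this point is qualitatively different and is where the real work lies. It builds a strongly plurisubharmonic exhaustion $\rho$ of $\C^n$ vanishing precisely on $\overline B_R\cap L$ (formula (\ref{eq:rho})) and applies the Drinovec Drnov\v sek--Forstneri\v c theorem (Theorem~\ref{DFmain}, case (b), which is exactly where $\dim_\C X<n$ is used, via $r=n\ge d+1$) to replace $f$ by a \emph{proper} holomorphic map $\tilde f\colon D\to\C^n$ with $\rho\circ\tilde f\ge c/2>0$. Properness makes the image $A=\tilde f(D)$ a \emph{closed complex subvariety} of $\C^n$ disjoint from $\overline B_R\cap L$, and then the set $K=A\cap\overline B_R$ (not $\tilde f(E)$ itself) satisfies: $K\cup L$ is polynomially convex. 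Even this last claim is not automatic; it is proved by an explicit function-theoretic argument, multiplying an entire function vanishing on $A$ by a high power of an entire function that is small on $L$ (obtained by Carleman approximation), and using that $\overline B_R\cup L$ is polynomially convex. So the missing idea in your proposal is the passage from ``$f(E)$ avoids $L$'' to ``the image is a closed subvariety of $\C^n$ avoiding $\overline B_R\cap L$'' via a properness theorem; without some such global analytic input, the polynomial convexity required by Theorem~\ref{th:FB} cannot be secured.
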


In the language of {\em Oka theory} this means that maps $X\to \C^n\setminus L$ from Stein manifolds of dimension $<n$ to $\C^n\setminus L$ enjoy the basic Oka property with approximation (cf.\ \cite[\S 5.15]{F2011}). For Oka theory we refer to the monograph \cite{F2011}, the surveys \cite{FL1,FL3}, and the introductory note by L\'arusson \cite{L-Oka}.

If $X$ is a Stein manifold and $k\in \Z_+$ is an integer such that $\dim_\R X +k<2n$, 
then a generic holomorphic map $X\to \C^n$ avoids a given smooth submanifold $L\subset \C^n$ of real dimension 
$k$ in view of the Thom transversality theorem, so in this case Theorem \ref{th:main} trivially follows.
However, the transversality theorem does not suffice to prove Theorem  \ref{th:main} if 
$\dim_\R X + \dim_\R L \ge 2n$. The first nontrivial case is $\dim_\C X= \dim_\R L=2$, $n=3$.

Theorem \ref{th:main} is an immediate corollary to the following proposition.

\begin{proposition}
\label{prop:FB}
Assume that $L\subset \C^n$, $E\subset U\subset X$ and $f\colon U\to\C^n$ are as in Theorem \ref{th:main}, with $f(E)\cap L=\emptyset$. Then there exist an arbitrarily small holomorphic perturbation $\tilde f$ of $f$ on a neighborhood of $E$ and a Fatou-Bieberbach domain $\Omega=\Omega_{\tilde f} \subset \C^n$ such that 
\begin{equation} \label{eq:FB}
	\tilde f(E) \subset \Omega \subset \C^n\setminus L.
\end{equation}
\end{proposition}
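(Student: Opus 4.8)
The plan is to derive the proposition from Theorem~\ref{th:FB}. If we can produce, arbitrarily close to $f$ on $E$, a holomorphic map $\tilde f$ on a neighborhood of $E$ such that the compact set $K:=\tilde f(E)$ satisfies $K\cap L=\emptyset$ and $K\cup L$ is polynomially convex, then Theorem~\ref{th:FB} yields a Fatou--Bieberbach domain $\Omega$ with $\tilde f(E)\subset\Omega\subset\C^n\setminus L$, which is precisely \eqref{eq:FB}. Applying an affine holomorphic automorphism of $\C^n$, we may assume $L=\R^k$ as in \eqref{eq:Rk}. Since $f(E)\cap L=\emptyset$ is an open condition, it persists under every sufficiently small perturbation, so the entire content of the proposition is to arrange, by an arbitrarily small perturbation of $f$, that $\tilde f(E)\cup\R^k$ be polynomially convex.

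I emphasize at the outset that one should not expect $\tilde f$ to be an embedding near $E$: a generic holomorphic map of an $m$-dimensional source into $\C^n$ is injective only when $n\ge 2m+1$, whereas the interesting range here is $m<n\le 2m$ (the first case being $m=2$, $n=3$). Thus $\tilde f(E)$ is in general only a compact set, which is all that Theorem~\ref{th:FB} requires. The freedom I would exploit is the $\cO(X)$-convexity of $E$: choosing $h_1,\dots,h_N\in\cO(X)$ so that $H=(h_1,\dots,h_N)\colon X\to\C^N$ embeds a neighborhood of $E$ with $H(E)$ polynomially convex, the graph $G=\{(\,\tilde f(x),H(x)\,):x\in E\}\subset\C^{n+N}$ sits over the polynomially convex base $H(E)$ and is therefore polynomially convex; hence $\tilde f(E)$ is always the image of a polynomially convex set under the linear projection $\C^{n+N}\to\C^n$. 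The whole problem is to descend this polynomial convexity, in the presence of $L$, down to $\C^n$.

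This descent is the main obstacle. When $n\ge 2m+1$ a generic linear projection preserves polynomial convexity of a complex $m$-dimensional set and nothing more is needed; in the range $m<n\le 2m$ no transversality argument is available and the projected image can acquire hull, so the lower-dimensional totally real subspace $L$ must be used in an essential way. Here the governing object is the nonnegative strictly plurisubharmonic quadratic polynomial
\[
  \rho(z)=|\mathrm{Im}\,z'|^2+|z''|^2,\qquad z=(z',z'')\in\C^k\times\C^{n-k},
\]
whose zero set is exactly $\R^k=L$, so that $\rho$ serves as a plurisubharmonic barrier along $L$. Recalling that in $\C^n$ the polynomial hull coincides with the plurisubharmonic hull, I would verify polynomial convexity of $\tilde f(E)\cup L$ by constructing, for each point off this set, a global plurisubharmonic separating function: $\rho$ (suitably composed and cut off) controls points near $L$, while the graph description controls points near $\tilde f(E)$. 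The construction of $\tilde f$ should be carried out so that these two families of barriers are compatible.

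Finally, the lower-dimensional, totally real nature of $L$ enters exactly as in the introduction. Once $\tilde f(E)\cup L$ is brought into a polynomially convex position, one uses that the complement of a polynomially convex compact set is built only from cells of dimension $\ge n$, so that $L$ does not link $\tilde f(E)$ and can be isotoped to infinity in $\C^n\setminus\tilde f(E)$ --- the vector-field flow invoked in the proof of Theorem~\ref{th:FB} in Section~\ref{sec:FB}. I expect the technical heart of the argument to be this simultaneous control of the polynomial hull and of the position of $L$ in low codimension; granting it, $K=\tilde f(E)$ and $L$ satisfy the hypotheses of Theorem~\ref{th:FB}, which then completes the proof.
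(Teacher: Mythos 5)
Your reduction is the right one --- produce $\tilde f$ close to $f$ on $E$ with $\tilde f(E)\cap L=\emptyset$ and $\tilde f(E)\cup L$ polynomially convex, then quote Theorem \ref{th:FB} --- and the barrier $\rho(z)=|\mathrm{Im}\,z'|^2+|z''|^2$ you write down is essentially the function \eqref{eq:rho} used in the paper. But the proposal stops exactly where the proof has to begin: you never construct $\tilde f$, and you explicitly ``grant'' the simultaneous control of the hull of $\tilde f(E)$ and of $L$. That step is not a matter of compatibility of barriers for a generic small perturbation, and no small generic perturbation can achieve it. Concretely: if $f$ maps a circle $E$ (which is $\cO(X)$-convex in an annulus $X$) onto a round circle in a complex line of $\C^n$, the polynomial hull of $\tilde f(E)$ contains an entire analytic disk for every nearby $\tilde f$, and if $L$ passes through that disk then $\tilde f(E)\cup L$ is never polynomially convex for $\tilde f$ merely close to $f$ on a neighborhood of $E$. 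The separating plurisubharmonic function you propose does not exist at points of $\widehat{\tilde f(E)}\setminus \tilde f(E)$, so the descent from the graph in $\C^{n+N}$ genuinely fails, as you yourself suspect.

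The missing idea is to change $f$ \emph{globally} on a strongly pseudoconvex domain $D\Supset E$ while keeping it close to $f$ only on $E$: the paper applies Theorem \ref{DFmain} with the strongly plurisubharmonic exhaustion $\rho$ of \eqref{eq:rho} (whose zero set is $\overline B_R\cap L$ and which is noncritical elsewhere, so case (b) applies) to obtain a \emph{proper} holomorphic map $\tilde f\colon D\to\C^n$ with $\rho\circ\tilde f\ge c/2>0$ by \eqref{eq:smalldrop}. Properness makes $A=\tilde f(D)$ a closed complex subvariety of $\C^n$ disjoint from $\overline B_R\cap L$; in particular $\tilde f(E)$ has no hidden hull inside $\overline B_R$. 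With $K=A\cap\overline B_R$, polynomial convexity of $K\cup L$ is then checked pointwise: for $p\in\overline B_R\setminus(A\cup L)$ take $g\in\cO(\C^n)$ with $g(p)=1$, $g|_A=0$, and a Carleman approximant $h$ with $h(p)=1$, $|h|<1/2$ on $L$; the function $gh^N$ separates $p$ for $N$ large, and $\overline B_R\cup L$ is polynomially convex by Stout. This is the device that replaces both your graph-projection attempt and your unproved barrier construction. Your closing remarks about linking and isotoping $L$ to infinity are part of the proof of Theorem \ref{th:FB} itself and play no role in the proposition.
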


Proposition \ref{prop:FB} is proved in \S  \ref{sec:maps}. The proof uses Theorem \ref{th:FB} together with the main result of the paper \cite{DF2010} by Drinovec Drnov\v sek and Forstneri\v c (see Theorem \ref{DFmain} below). 
Since $\Omega$ is (by definition) biholomorphic to $\C^n$, 
Theorem \ref{th:main} follows from Proposition \ref{prop:FB} 
by applying the Oka-Weil approximation theorem for holomorphic maps $X\to \Omega$.

In the setting of Theorem \ref{th:main}, the map $f$ extends (without changing its values on a neighborhood of $E$) to a continuous map $f_0 \colon X\to \C^n\setminus L$ by  topological reasons. In fact, the complement $\C^n\setminus L$ of an affine subspace $L\subset \C^n$ of real dimension $k$ is homotopy equivalent to the sphere $\mathbb S^{2n-k-1}$ and we have $2n-k-1\ge n$ by the hypotheses of the theorem, while $E$ has 
arbitrarily small smoothly bounded neighborhoods $V\subset X$ such that the pair $(X,\overline V)$ is 
homotopy equivalent to a relative CW complex of dimension at most $\dim_\C X <n$ 
(see \cite{Hamm} or \cite[p.~96]{F2011}).

Theorem \ref{th:main} is a first step in understanding the following problem.

\begin{problem}
Is $\C^n\setminus \R^k$ an Oka manifold for some (or for all) pairs of integers $1\le k\le n$ with $n>1$?
Equivalently, does the conclusion of Theorem \ref{th:main} hold for maps $X\to \C^n\setminus \R^k$ from all Stein manifolds $X$ irrespectively of their dimension?
\end{problem}

Since $\C^n\setminus \R^k$ is a union of Fatou-Bieberbach domains as mentioned above,
it is strongly dominable by $\C^n$, and hence is a natural candidate to be an Oka manifold. 
(A complex manifold $Z$ of dimension $n$ is {\em strongly dominable by $\C^n$} if for every point 
$z_0 \in Z$ there exists a holomorphic map $f\colon \C^n\to Z$ such that $f(0)=z_0$ and $f$ has maximal rank $n$ 
at $0$.)  Clearly every Oka manifold is strongly dominable, but the converse is not known. 
See e.g.\ \cite{FL1,FL2} for a discussion of this topic.
 
Our motivation for looking at this problem comes from two directions. One is that we currently know
rather few examples of open Oka subsets of complex Euclidean spaces $\C^n$ $(n>1)$.
A general class of such sets are complements $\C^n\setminus A$ of tame (in particular, of algebraic) 
complex subvarieties $A\subset \C^n$ of dimension $\dim A\le n-2$ (cf.\ \cite[Proposition 5.5.14, p.\ 205]{F2011}).
Furthermore, complements of some special low degree hypersurfaces are Oka.
In particular, the complement $\C^n\setminus \bigcup_{j=1}^k H_j$ of at most $n$ 
affine hyperplanes $H_1,\cdots, H_k\subset \C^n$ in general position is Oka
(Hanysz \cite[Theorem 3.1]{Hanysz}). Finally, basins of uniformly attracting sequences of
holomorphic automorphisms of $\C^n$ are Oka (Forn\ae ss and Wold \cite[Theorem 1.1]{FW}). 
On the other hand, 
the only compact sets $A\subset \C^n$ whose complements $\C^n\setminus A$ are known to be Oka are the finite sets. Recently Andrist and Wold \cite{AW2012} showed that the complement $\C^n\setminus B$ of a closed ball  
fails to be elliptic in the sense of Gromov  if $n\ge 3$ (cf.\ \cite[Def.\ 5.5.11, p.\ 203]{F2011}), but it remains unclear whether $\C^n\setminus B$ is Oka. (Ellipticity implies the Oka property, but the converse is not known.
A more complete analysis of holomorphic extendibility of sprays across compact subsets 
in Stein manifolds has been made recently by Andrist, Shcherbina and Wold \cite{ASW}.)
A positive step in this direction has been obtained recently in \cite{F-R} by showing that maps 
$X\to\C^n\setminus B$ from Stein manifolds $X$ with $\dim_\C X <n$ into the complement of a ball 
satisfy the Oka principle with approximation and interpolation.
 
Our second and more specific motivation was the question whether the set of Oka fibers is open in any holomorphic family of compact complex manifolds. (It was recently shown in \cite[Corollary 5]{FL2} that the set of Oka fibers fails to be closed in such families; an explicit recent example is due to Dloussky \cite{Dloussky}.) To answer this question in the negative, one could look at the example of Nakamura \cite{Nak} of a holomorphic family of compact three-folds such that the universal covering of the center fiber is $\C^3$, while the other fibers have coverings  biholomorphic to $(\C^2\setminus \R^2)\times \C$ (cf.\ page 98, Case 3 in \cite{Nak}). If $\C^2\setminus \R^2$ fails to be an Oka manifold then, since the class of Oka manifolds is closed under direct products and under 
unramified holomorphic coverings and quotients, the corresponding 3-fold (which is an unramified quotient of $(\C^2\setminus \R^2)\times \C$) also fails to be Oka; hence we would have an example of an isolated Oka fiber in a holomorphic family of compact complex threefolds.

%
\section{A construction of Fatou-Bieberbach domains}
\label{sec:FB}
In this section we prove Theorem \ref{th:FB}. There is a remote analogy between this result and 
Proposition 11 in \cite{F-R}; the latter gives a similar separation of two compact disjoint sets in $\C^n$ whose union is polynomially convex and one of them is holomorphically contractible. In spite of this similarity, the proof of Theorem \ref{th:FB} is completely different and considerably more involved than the proof of the cited result from \cite{F-R}. 

We begin with some preliminaries. 

If $L$ is a closed, unbounded, totally real submanifold of $\C^n$ and $K$ is a compact subset of $\C^n$, we say that $K\cup L$ is polynomially convex in $\C^n$ if $L$ can be exhausted by compact sets $L_1\subset L_2\subset\cdots\subset \bigcup_{j=1}^\infty L_j =L$ such that $K\cup L_j$ is polynomially convex for all $j\in\N$. If this holds then standard results imply that any function that is continuous on $L_j$ and holomorphic on a neighborhood of $K$ can be approximated, uniformly on $K\cup L_j$, by holomorphic polynomials on $\C^n$. It follows that $K\cup L'$ is then polynomially convex for any compact subset $L'$ of $L$. 
(See \cite{L-W2009} for these results.) 

We shall also need the following stability result. If $K$ and $L$ are as above, with $K\cap L=\emptyset$ and $K\cup L$ polynomially convex, then $K\cup \wt L$ is polynomially convex for any totally real submanifold $\wt L\subset \C^n$ that is sufficiently close to $L$ in the fine $\cC^1$ Whitney topology. (See L\o w and Wold \cite{L-W2009}.)

The proof of Theorem \ref{th:FB} hinges upon the
following recent result of Kutzschebauch and Wold \cite{KW2014} 
concerning Carleman approximation of certain isotopies of unbounded totally real submanifolds of $\C^n$ by holomorphic automorphisms of $\C^n$.

\begin{theorem}[Kutzschebauch and Wold, Theorem 1.1 in \cite {KW2014}] \label{th:KW2014}
Let $1\le k<n$ and consider $\R^k$ as the standard totally real subspace (\ref{eq:Rk}) of $\C^n$.
Let $K\subset\mathbb C^n$ be a compact set, let $\Omega\subset\C^n$ be an open set containing $K$, 
and let 
\[
	\phi_t \colon \Omega\cup\mathbb R^k \rightarrow\mathbb C^n,\quad t\in[0,1],
\]
be a smooth isotopy of embeddings, with $\phi_0=Id$, such that the following hold:
\begin{itemize}
\item[(1)] $\phi_t|_\Omega$ is holomorphic for all $t$,
\item[(2)] $\phi_t(K\cup\mathbb R^k)$ is polynomially convex for all $t$, and 
\item[(3)] there is a compact set $C\subset\mathbb R^k$ such that 
$\phi_t|_{\mathbb R^k\setminus C}=Id$ for all $t$.
\end{itemize}
Then for any $s\in\mathbb N$  the map $\phi_1$ is $\mathscr C^s$-approximable on  
$K\cup\mathbb R^k$, in the sense of Carleman, by holomorphic automorphisms of $\mathbb C^n$.
%
\end{theorem}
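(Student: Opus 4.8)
The plan is to upgrade the Anders\'en--Lempert theorem from uniform approximation on a compact set to Carleman $\mathscr C^s$-approximation on the unbounded set $K\cup\R^k$ by means of a push-out (telescoping) induction. I would first exhaust $\R^k$ by the real balls $M_j=\R^k\cap\overline{B_{R_j}}$, with $R_1<R_2<\cdots\to\infty$ chosen so that $K\cup C\subset B_{R_1}$, and set $E_j=K\cup M_j$; these are compact, increase to $K\cup\R^k$, and each is polynomially convex. By hypothesis (2) together with the paper's convention that polynomial convexity of the unbounded set $\phi_t(K\cup\R^k)$ entails that of $\phi_t(K)\cup L'$ for every compact $L'\subset\phi_t(\R^k)$, the image $\phi_t(E_j)$ is polynomially convex for all $t\in[0,1]$ and all $j$. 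The isotopy is already holomorphic near $K$; near the totally real part $M_j$ I would replace the merely smooth isotopy $\phi_t$ by one that is genuinely holomorphic, approximating it in the fine $\mathscr C^s$ topology by means of an almost holomorphic extension of $\phi_t|_{\R^k}$ followed by a $\dibar$-correction on a neighbourhood, and patching the two with a cut-off to obtain a holomorphic isotopy on a neighbourhood of $E_j$. The stability of polynomial convexity under $\mathscr C^1$-small perturbations (L\o w and Wold \cite{L-W2009}) keeps all the relevant images polynomially convex throughout this replacement.

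The engine of the induction is the Anders\'en--Lempert--Forstneri\v c--Rosay theorem: a $\mathscr C^1$ isotopy of biholomorphisms $\psi_t$, $t\in[0,1]$, defined on a neighbourhood of a compact set $A$, with $\psi_0=\mathrm{Id}$ and $\psi_t(A)$ polynomially convex for every $t$, has its time-one map $\psi_1$ approximable, uniformly on $A$, by holomorphic automorphisms of $\C^n$. I would build the desired map as an infinite composition $\Psi=\lim_{j\to\infty}\Psi_j$ with $\Psi_j=h_j\circ\cdots\circ h_1$ and each $h_j\in\Aut\C^n$ produced by one application of this theorem. At the $(j{+}1)$-st step the map to be corrected is $F_{j+1}=\phi_1\circ\Psi_j^{-1}$: it is biholomorphic near the polynomially convex set $\Psi_j(E_{j+1})$ (polynomial convexity being preserved by the automorphism $\Psi_j$), and, since $\Psi_j$ already $\mathscr C^s$-approximates $\phi_1$ on $E_j$, it is $\mathscr C^s$-close to the identity on $\Psi_j(E_j)$. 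Concatenating a contraction of $\Psi_j^{-1}$ to the identity with the transported path $t\mapsto\phi_t\circ\Psi_j^{-1}$ provides an isotopy from $\mathrm{Id}$ to $F_{j+1}$, and the theorem then yields an automorphism $h_{j+1}$ that simultaneously $\mathscr C^s$-approximates $F_{j+1}$ on $\Psi_j(E_{j+1})$, forcing $\Psi_{j+1}\approx\phi_1$ on $E_{j+1}$, and is $\mathscr C^s$-close to the identity on $\Psi_j(E_j)$, so that the earlier approximation survives.

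It then remains to pass to the limit. If the tolerances are summable and each $h_{j+1}$ is in addition taken $\mathscr C^s$-close to the identity on an exhausting family of balls $\Psi_j(\overline{B_{\rho_j}})$ with $\rho_j\to\infty$, the compositions $\Psi_j$ converge uniformly on compacta to an injective holomorphic map $\Psi$; controlling the inverses $h_{j+1}^{-1}$ in the same way makes the images exhaust $\C^n$, so that $\Psi\in\Aut\C^n$ rather than merely a biholomorphism onto a proper subdomain. The passage from $\mathscr C^0$-closeness on the complex neighbourhoods to the required $\mathscr C^s$-Carleman estimate along the totally real $\R^k$ is then furnished by the Cauchy estimates, the prescribed positive weight being absorbed into the summable tolerances.

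The main obstacle is the inductive step itself: one must produce a single corrective automorphism $h_{j+1}$ that is large enough on the new annulus $E_{j+1}\setminus E_j$ to carry the approximation outward, yet $\mathscr C^s$-small on the already-treated region so as to preserve it, and this must be arranged so that the generating isotopy passes through biholomorphisms with polynomially convex images at every stage, which is the precise hypothesis of the Anders\'en--Lempert theorem. Here hypothesis (3) is decisive: because $\phi_1=\mathrm{Id}$ on $\R^k\setminus C$ with $C$ compact, beyond the first ball the target is the identity on the totally real part, so the successive corrections are genuine perturbations of the identity and the tail of the composition can be made to converge. The remaining delicate point, which distinguishes approximation by automorphisms from approximation by injective holomorphic maps, is to guarantee surjectivity of the limit, and this is what forces one to track the inverses throughout the induction.
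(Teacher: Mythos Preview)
This theorem is not proved in the paper under review; it is quoted as Theorem~1.1 of Kutzschebauch and Wold \cite{KW2014} and then used as a black box in the proof of Lemma~\ref{lem:inductive}. There is thus no proof in this paper to compare your proposal against.

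Judged on its own terms, your telescoping strategy has the right shape, but there is a genuine gap in the passage to the limit. You assert that each corrective automorphism $h_{j+1}$ can ``in addition'' be taken $\mathscr C^s$-close to the identity on balls $\Psi_j(\overline{B_{\rho_j}})$ with $\rho_j\to\infty$, so that the $\Psi_j$ converge on compacta in $\C^n$ and the limit is a global automorphism. But $h_{j+1}$ is already constrained to approximate $F_{j+1}=\phi_1\circ\Psi_j^{-1}$ on $\Psi_j(E_{j+1})$, and nothing in your induction controls $\Psi_j$ away from the thin $k$-dimensional set $E_j=K\cup M_j$; on the new shell $\Psi_j(E_{j+1}\setminus E_j)$, and a fortiori on any full-dimensional ball of growing radius, $F_{j+1}$ need not be close to the identity at all. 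The two requirements on $h_{j+1}$ are therefore in conflict, and you cannot simply impose both. Your proposed remedy of ``controlling the inverses $h_{j+1}^{-1}$ in the same way'' does not resolve this: the inverse of an automorphism that moves points far also moves points far. Without this step the limit $\Psi=\lim_j\Psi_j$ is at best a biholomorphism from a possibly proper Fatou--Bieberbach subdomain onto $\C^n$, not an element of $\Aut\C^n$, and the conclusion of the theorem is lost. This surjectivity issue is precisely the new difficulty that separates the Carleman (unbounded) setting from the compact Forstneri\v c--L\o w result, and it requires an argument substantially more delicate than the one you have outlined; you correctly flag it as ``the remaining delicate point'', but you have not actually addressed it.
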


Theorem \ref{th:KW2014}  is a Carleman version of a theorem due to 
Forstneri\v c and L\o w \cite{FLow} (a special case was already proved in \cite{FRosay}).
The main difference is that, in Theorem \ref{th:KW2014}, the approximation takes place 
in the fine Whitney topology on {\em unbounded} totally real submanifolds of $\C^n$;
a considerably more difficult result to prove.
Results of this kind originate in the Anders\'en-Lempert-Forstne\-ri\v c-Rosay theorem 
\cite{AL,FRosay} on approximation of isotopies of biholomorphic maps
between Runge domains in $\C^n$ for $n>1$ by holomorphic automorphisms of $\C^n$.
(A survey can be found in \cite[Chap.\ 4]{F2011}.)

By using Theorem \ref{th:KW2014} we now prove the following lemma which is the
induction step in the proof of Theorem \ref{th:FB}.

\begin{lemma}\label{lem:inductive}
Assume that $L$ is a totally real affine subspace of $\C^n$ $(n>1)$ of dimension $\dim_\R L<n$ and
$K$ is a compact subset of $\C^n\setminus L$ such that $K\cup L$ is polynomially convex.
Given a compact polynomially convex set $M\subset \C^n$ with $K\subset \mathring M$ 
and numbers $\epsilon>0$ and $s\in \N$,
there exists a holomorphic automorphism $\Phi$ of $\C^n$ satisfying
the following conditions: 
\begin{itemize}
\item[\rm (a)] $\Phi(L)\cap M =\emptyset$, 
\item[\rm (b)] $\Phi(L)\cup M$ is polynomially convex, 
\item[\rm (c)] $\sup_{z\in K} |\Phi(z)-z| <\epsilon$, and 
\item[\rm (d)] $\Phi|_L$ is arbitrarily close to the identity in the fine $\mathscr C^s$ topology near infinity.
\end{itemize}
\end{lemma}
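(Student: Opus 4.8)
The plan is to reduce to the standard subspace, pin down a convenient final position of $L$, and then realise the motion $\R^k\rightsquigarrow L_1$ by an isotopy to which the Kutzschebauch--Wold theorem applies. First I would apply an affine holomorphic automorphism of $\C^n$ to arrange $L=\R^k$ as in (\ref{eq:Rk}); since this map is fixed, conditions (a)--(d) are unaffected up to rescaling $\epsilon$ and the $\mathscr{C}^s$--bounds. The key point, and the one place the hypothesis $\dim_\R L<n$ enters, is that the coordinate $z_n$ is then \emph{normal} to $\R^k$, so the $(k+1)$--dimensional real affine subspace $P=\R^k\oplus\mathrm i\R e_n=\{(x_1,\dots,x_k,0,\dots,0,\mathrm i u):x_j,u\in\R\}$ is again totally real (its dimension $k+1\le n$).

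Next I would fix the target. Choose $R$ with $M\subset\{|z|<R\}$ and a cutoff $\chi\in\mathscr{C}^\infty(\R^k,[0,1])$ with $\chi\equiv1$ on $\{|x|\le\rho\}$ and $\chi\equiv0$ off $\{|x|\le\rho+1\}$, and set $L_1=\{(x_1,\dots,x_k,0,\dots,0,\mathrm i s\,\chi(x)):x\in\R^k\}\subset P$. For $s,\rho$ large, $L_1$ is totally real, equals $\R^k$ off the compact set $C=\{|x|\le\rho+1\}$, lies outside $\overline B_R$, and is disjoint from $M$ (hence from $K$). The polynomial $p(z)=z_1^2+\dots+z_k^2-z_n^2$ satisfies $p\equiv|x|^2+s^2\chi(x)^2\ge\min(s^2,\rho^2)$ on $L_1$ (real and large), while $|p|\le\mathrm{const}\cdot R^2$ on $M$; hence $\widehat{p(M)}$ and $\widehat{p(L_1\cap\overline B_T)}$ are disjoint in $\C$ for every $T$. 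Since $M$ is polynomially convex and compact subsets of the totally real affine subspace $P\supset L_1$ are polynomially convex, a standard separation lemma (Kallin's lemma) shows $M\cup L_1$, and likewise $K\cup L_1$, is polynomially convex. This settles (a),(b) for the final position.

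It remains to join $\R^k$ to $L_1$ by an isotopy $\phi_t$ of embeddings of $\Omega_0\cup\R^k$, where $\Omega_0$ is a thin neighbourhood of $K$, with $\phi_0=\mathrm{id}$, $\phi_t\equiv\mathrm{id}$ on $\Omega_0$ and off $C$, each $L_t=\phi_t(\R^k)$ totally real and a fixed distance from $K$, and---crucially---$K\cup L_t$ polynomially convex for all $t$. Theorem~\ref{th:KW2014} then yields an automorphism $\Phi$ that $\mathscr{C}^s$--approximates $\phi_1$ on $K\cup\R^k$ in the Carleman sense, from which (a)--(d) follow: the fine $\mathscr{C}^s$--control near infinity gives (c), (d) and, using $C\supset\R^k\cap M$, the disjointness (a), while the L\o w--Wold stability applied to $\Phi(\R^k)$ near $L_1$ gives (b). When $K\cap P=\emptyset$ one can take the straight homotopy $L_t=\{(x,0,\dots,0,\mathrm i ts\chi(x))\}\subset P$: these stay in $P$, hence off $K$, and polynomial convexity of $K\cup L_t$ propagates from $t=0$ by the L\o w--Wold stability theorem through finitely many $\mathscr{C}^1$--small steps. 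The main obstacle is the general case $K\cap P\ne\emptyset$, where the straight bump would cross $K$ and one must route $L_t$ \emph{around} $K$ inside $\C^n\setminus K$. This is topologically possible since $K$ is polynomially convex, so $\C^n\setminus K$ is $(n-1)$--connected and $k<n$ leaves room; the real difficulty is to carry out this motion through totally real submanifolds while preserving polynomial convexity of $K\cup L_t$ at every instant, which I would again control via the stability theorem along a sufficiently fine subdivision of $[0,1]$, keeping a uniform distance between $L_t$ and $K$.
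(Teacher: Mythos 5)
Your overall strategy is the right one --- reduce to $L=\R^k$, choose a final totally real position $L_1$ outside a large ball that agrees with $\R^k$ near infinity, and feed a connecting isotopy into Theorem \ref{th:KW2014} --- and your endpoint construction is a genuinely different and attractive alternative to the paper's: placing $L_1$ inside the totally real $(k+1)$-plane $P=\R^k\oplus\I\R e_n$ and separating it from $M$ with the polynomial $z_1^2+\cdots+z_k^2-z_n^2$ via Kallin's lemma gives polynomial convexity of $M\cup L_1$ essentially for free, whereas the paper has to invoke a perturbation result to secure this at $t=1$. However, the heart of the lemma is the existence of an isotopy $L_t$ of \emph{totally real embeddings}, fixed near infinity, avoiding $K$, with $K\cup L_t$ polynomially convex for \emph{every} $t\in[0,1]$, and here your argument has a genuine gap. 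First, the claim that polynomial convexity of $K\cup L_t$ ``propagates from $t=0$ by the L\o w--Wold stability theorem through finitely many $\mathscr{C}^1$-small steps'' is not valid: stability makes polynomial convexity an \emph{open} condition in $t$, but not a closed one, so it can be lost at some positive time; indeed your own Kallin estimate only controls $t$ near $1$ (the minimum of $p$ on $L_t$ is $\min(t^2s^2,\rho^2)$, which is small for small $t$). One needs a result that actively \emph{modifies} the isotopy to restore polynomial convexity at each instant --- this is Proposition 4.1 of \cite{KW2014} (building on \cite{L-W2009}), which the paper invokes and which cannot be replaced by stability alone. Second, the general case $K\cap P\neq\emptyset$, which you correctly identify as the main obstacle, is left unresolved: the $(n-1)$-connectivity of $\C^n\setminus K$ yields a continuous motion of $L$ to infinity, but not an isotopy of totally real embeddings with the required convexity.

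For comparison, the paper closes exactly these two gaps as follows. It takes a strongly plurisubharmonic Morse exhaustion $\rho$ on $\C^n$ that is negative on $K$, positive on $L$, and equal to $|z|^2$ near infinity, and flows $L$ by the (cut-off) gradient of $\rho$; since $\rho$ increases along the flow, the isotopy automatically stays away from $K$, and because the Morse indices of $\rho$ are at most $n>\dim_\R L$ a general position argument lets the flow carry $L$ out of the ball $B\supset M$ in finite time while fixing it near infinity. This flow need not preserve total reality, so the paper then applies Gromov's h-principle for totally real immersions (plus general position, using $k<n$) to replace $L_t$ by a nearby isotopy of totally real embeddings, and finally Proposition 4.1 of \cite{KW2014} to perturb once more so that $K\cup L_t$ is polynomially convex for all $t$ and $B\cup L_t$ is polynomially convex for $t$ near $1$. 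Only then does Theorem \ref{th:KW2014} apply. If you want to salvage your approach, you would still need substitutes for the last two ingredients; the gradient-flow trick is the cleanest known way to avoid $K$ without case distinctions on $K\cap P$.
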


\begin{proof}
By a complex linear change of coordinates on $\C^n$ we may assume that $L$ is the standard totally real 
subspace $\R^k\subset \C^n$ given by (\ref{eq:Rk}). 

The properties of $K$ and $L$ imply that there exists a strongly plurisubharmonic Morse exhaustion function $\rho$ on $\C^n$ which is negative on $K$, positive on $L$, and equals $|z|^2$ near infinity. The first two properties are obtained in a standard way from polynomial convexity of $K\cup L$; for the last property see \cite[p.\ 299]{F1994}, proof of Theorem 1. 

Pick a closed ball $B\subset\C^n$ centered at the origin with $M\subset \mathring B$.
Let $V$ be the gradient vector field of $\rho$, multiplied by a smooth cutoff function of the form $h(|z|^2)$ that equals one near $B$ and equals zero near infinity. The flow $\psi_t$ of $V$ then exists for all $t\in \R$ and equals the identity near infinity. Apply this flow to $L$. 
Since $\dim_\R L < n$ and the Morse indices of $\rho$ are at most $n$
(as $\rho$ is strongly plurisubharmonic), a general position argument (deforming $V$ slightly if needed) 
shows that the trace of the isotopy $L_t:=\psi_t(L)$ for $t\ge 0$  does not approach any of the  (finitely many) 
critical points of $\rho$. It follows that $L$ is flown completely out of the ball $B$ in a finite time $t_0>0$, fixing $L$ all the time near infinity. Also, we have $K\cap L_t=\emptyset$ for all $t\in[0,1]$ by the construction. Reparametrizing the time scale, we may assume that this happens at time $t_0=1$; 
so $L_1$ is a smooth submanifold of $\C^n\setminus B$ which agrees with $L=\R^k$ near infinity. 

Since $L=L_0$ is totally real and the isotopy is fixed near infinity, Gromov's h-principle for totally real immersions 
(cf.\ \cite{Gromov:convex,Gromov:book}) implies that the isotopy $L_t$ constructed above
can be $\cC^0$ approximated by another isotopy $\wt L_t\subset \C^n$ consisting of totally real embeddings 
$\R^k\hra \C^n$, with $\wt L_0=L$ and $\wt L_t=L_t=\R^k$ near infinity for all $t\in [0,1]$. 
(Gromov's  theorem gives an isotopy of {\em totally real immersions}. Since $k<n$, 
a general position argument shows that a generic 1-parameter isotopy of immersions $\R^k\to \C^n$ consists 
of embeddings. Since we keep $\wt L_t$ fixed near infinity, we obtain totally real embeddings
provided that the approximation of the totally real immersions furnished by Gromov's theorem is close enough
in the $\mathscr C^1$ topology.)  
In particular, we may assume that the new isotopy $\wt L_t$ also avoids the set $K$ for all $t\in [0,1]$
and that $\wt L_1\subset \C^n \setminus B$. 

By Proposition 4.1 in \cite{KW2014} (see also L\o w and Wold \cite{L-W2009} for the nonparametric case)
there is a small smooth perturbation 
$\widehat L_t\subset \C^n$ of the isotopy $\wt L_t$ from the previous step, with $\widehat L_0=L$ and 
$\widehat L_t=\wt L_t=\R^k$ near infinity for all $t\in [0,1]$, such that $\widehat L_t$ is a totally real embedding 
and, what is the main new point, the union $K\cup \widehat L_t$ is polynomially convex for every $t\in [0,1]$. 
Furthermore, by the same argument we may assume that $\widehat L_t\cup B$ is polynomially convex 
for values of $t$ sufficiently close to $t=1$. 

To simplify the notation we now replace the initial isotopy $L_t$ by $\widehat L_t$ and assume that 
the isotopy $L_t$ satisfies these properties. 

Pick a smooth family of diffeomorphisms $\phi_t\colon L\to L_t$ $(t\in [0,1])$ such that $\phi_0$ is the identity map on $L=L_0$ and $\phi_t$ is the identity on $L$ near infinity for every $t\in [0,1]$. 
Also we define $\phi_t$ to be the identity map on a neighborhood of $K$ for all $t\in [0,1]$;
this is possible since $K\cap L_t=\emptyset$ for all $t$ by the construction.


Under these  assumptions,  Theorem \ref{th:KW2014} furnishes a holomorphic automorphism 
$\Phi$ of $\C^n$ which is arbitrarily close to the identity map on $K$ 
and whose restriction to $L$ is arbitrarily close to the diffeomorphism $\phi_1\colon L\to L_1$ 
in the fine $\mathscr C^s$ topology on $L$. In particular, as $B\cup L_1$ is polynomially convex, 
we can ensure (by using stability of polynomial convexity mentioned at the beginning of this section) 
that $B\cup \Phi(L)$ is also polynomially convex. 
Since $M$ is polynomially convex and $M\subset B$, it follows that $M\cup \Phi(L)$ is 
polynomially convex as well.
\end{proof}

\begin{proof}[Proof of Theorem \ref{th:FB}]
We shall inductively apply Lemma \ref{lem:inductive} to construct a sequence of automorphisms
$\Psi_j$ of $\C^n$ which pushes $L$ to infinity and approximates the
identity map on $K$. The domain of convergence of this sequence will be a Fatou-Bieberbach 
domain  containing $K$ and avoiding $L$. (This is the so called  {\em push-out method};
see e.g.\ \cite[Corollary 4.4.2, p.\ 115]{F2011}.)

Pick an increasing sequence of closed balls 
$B=B_1\subset B_2\subset \cdots \subset \bigcup_{j=1}^\infty B_j =\C^n$ satisfying
$K\subset \mathring B_1$ and $B_j\subset \mathring B_{j+1}$ for every $j\in \N$. 
To begin the induction, let $\Phi_1$ be an automorphism of $\C^n$, 
furnished by Lemma \ref{lem:inductive} applied to $L$, $K$ and $M=B_1$, 
which satisfies the following  conditions:
\begin{itemize}
\item[\rm (a$_1$)]  $\Phi_1(L)\cap B_1=\emptyset$,
\item[\rm (b$_1$)]  $\Phi_1(L)\cup B_1$ is polynomially convex, and 
\item[\rm (c$_1$)]  $\Phi_1$ is close to the identity map on $K$ and $\Phi_1(K)\subset \mathring B_1$.
\end{itemize}
Property (b$_1$) implies that the set $\Phi_1^{-1}(\Phi_1(L)\cap B_1) = L\cup \Phi_1^{-1}(B_1)$ 
is polynomially convex, (a$_1$) shows that $L\cap \Phi_1^{-1}(B_1)=\emptyset$,
and (c$_1$) implies $K\subset  \Phi_1^{-1}(\mathring B_1)$. 

Next we apply Lemma \ref{lem:inductive} with $L$ as above, $M=\Phi_1^{-1}(B_2)$ and
$K$ replaced by $\Phi_1^{-1}(B_1)$ to find an automorphism $\wt \Phi_2$ of $\C^n$ satisfying 
the following  conditions:
\begin{itemize}
\item[\rm (a$_2$)] $\wt \Phi_2(L) \cap \Phi_1^{-1}(B_2) =\emptyset$, 
\item[\rm (b$_2$)] $\wt \Phi_2(L) \cup \Phi_1^{-1}(B_2)$ is polynomially convex, and 
\item[\rm (c$_2$)] $\wt \Phi_2$ is close to the identity map on $\Phi_1^{-1}(B_1)$.
\end{itemize}
Set $\Psi_1=\Phi_1$ and
\[
	\Phi_2=\Phi_1\circ \wt \Phi_2\circ \Phi_1^{-1},\qquad 
	\Psi_2=\Phi_2\circ \Phi_1 =\Phi_1\circ \wt \Phi_2.
\]
Property (a$_2$) implies that $\Psi_2(L)\cap B_2=\emptyset$, (b$_2$) implies that
$\Psi_2(L) \cup B_2$ is polynomially convex, and  (c$_2$) shows that 
$\Phi_2$ is close to the identity on $B_1$.

Continuing inductively we obtain a sequence of automorphisms 
$\Phi_j\in \Aut\C^n$ for $j=1,2,\ldots$ such that, setting 
$\Psi_j=\Phi_j\circ \Phi_{j-1}\circ\cdots\circ\Phi_1$, we have
\begin{itemize}
\item[\rm ($\alpha_j$)]     $\Psi_j(L)\cap B_j =\emptyset$ for each $j\in \N$, 
\item[\rm ($\beta_j$)]       $\Psi_j(L) \cup B_j$ is polynomially convex, and
\item[\rm ($\gamma_j$)]  $\Phi_j=\Psi_j\circ \Psi_{j-1}^{-1}$ is arbitrarily close to the identity map on 
$B_{j-1}$ for every $j>1$.
\end{itemize}
If $\Phi_j$ is chosen sufficiently close to the identity map on $B_j$ for every $j>1$ (and 
$\Phi_1$ is sufficiently close to the identity on $K$) then the domain of convergence
\[
	\Omega=\{z\in \C^n\colon \exists M_z>0 \ \text{such\ that}\ |\Psi_j(z)| \le M_z\ \ \forall j\in \N\}
\]
is a Fatou-Bieberbach domain \cite[Corollary 4.4.2]{F2011}.
Property ($\gamma_j$) ensures that $K\subset \Omega$, while property  ($\alpha_j$)
implies $L\cap \Omega=\emptyset$. This completes the proof of Theorem \ref{th:FB}.
\end{proof}

%
\section{Separation of varieties from totally real affine subspaces by Fatou-Bieberbach domains}  
\label{sec:maps}

In this section we prove Proposition \ref{prop:FB} stated in Sec.\ \ref{sec:Intro}. 
As explained there, this will also prove Theorem \ref{th:main}.

We shall  use the following result of Drinovec Drnov\v sek and Forstneri\v c which we state 
here  for future reference. We adjust the notation to the situation of the present paper.

%
%
%
%
\begin{theorem}[Theorem 1.1 in \cite{DF2010}]\label{DFmain}
Assume that $Z$ is an $n$-dimensional complex manifold, $\Omega$ is an open subset of $Z$, 
$\rho\colon \Omega\to (0,+\infty)$ is a smooth Morse function 
whose Levi form has at least $r$ positive eigenvalues at every point 
of $\Omega$ for some integer $r\le n$, and for any pair of real numbers $0<c_1<c_2$ the set 
\[
	\Omega_{c_1,c_2}= \{z\in\Omega\colon c_1\le \rho(z)\le c_2\}
\]
is compact. Let $D$ be a smoothly bounded, relatively compact, strongly pseudoconvex domain in a 
Stein manifold $X$, and let $f_0\colon\bar D\to Z$ be a continuous map 
that is holomorphic in $D$ and satisfies $f_0(bD)\subset \Omega$. If
\begin{itemize}
\item[(a)] $r\ge 2d$, where $d=\dim_{\C} X$, 
\end{itemize}
or if
\begin{itemize}
\item[(b)] $r\ge d+1$ and $\rho$ has no critical points of index $>2(n-d)$ in $\Omega$,
\end{itemize}
then $f_0$ can be approximated, uniformly on compacts in $D$, by holomorphic maps $f\colon D\to Z$ 
such that $f(x)\in \Omega$ for every $x\in D$ sufficiently close to $bD$ and 
\begin{equation}\label{eq:proper}
	\lim_{x\to bD} \rho(f(x))=+\infty.
\end{equation}
Moreover, given an integer $k\in\Z_+$, the map $f$ can be chosen to agree with 
$f_0$ to order $k$ at each point in a given finite set $\sigma\subset D$. 
\end{theorem}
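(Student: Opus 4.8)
The plan is to prove Theorem \ref{DFmain} by the familiar inductive ``pushing to infinity'' scheme, building a sequence of holomorphic maps $f_j\colon D\to Z$ that converges uniformly on compacts to the desired map $f$ while the boundary images are forced into ever higher sublevels of $\rho$. After an initial approximation (justified by the approximation theorem for maps from strongly pseudoconvex domains) I may assume $f_0$ is holomorphic on a neighborhood of $\bar D$. The entire construction then reduces to a single \emph{one-step lemma}: given a holomorphic $g$ on $\bar D$ with $g(bD)\subset\{\rho>c\}$, and given $c'>c$, a compact $K\Subset D$, and $\varepsilon>0$, produce a holomorphic $\tilde g$ that is $\varepsilon$-close to $g$ on $K$, agrees with $g$ to order $k$ on the finite set $\sigma$, and satisfies $\tilde g(bD)\subset\{\rho>c'\}$. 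Iterating this along $c_1<c_2<\cdots\to+\infty$, with $K_j\Subset D$ exhausting $D$ and a summable sequence $\varepsilon_j\to 0$, yields the $f_j$; Cauchy estimates give convergence on compacts, and $\lim_{x\to bD}\rho(f(x))=+\infty$ follows since each step pushes the boundary past $c_j$ while all later perturbations are uniformly small near $bD$.

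I would prove the one-step lemma by subdividing $[c,c']$ at the critical values of $\rho$ and treating two cases. In the \emph{noncritical case}, where $[a,b]$ contains no critical value, the region $\{a\le\rho\le b\}$ undergoes no change of topology, and I would push the boundary image upward along the gradient-like directions of $\rho$. Locally, near any image point, choose holomorphic coordinates in which the $r$ positive Levi directions of $\rho$ are visible; along those directions $\rho$ is strictly plurisubharmonic, so displacing the image there strictly increases $\rho$. To globalize, cover $bD$ by finitely many small strongly pseudoconvex \emph{bumps} (available because $D$ is strongly pseudoconvex), modify $g$ on each bump by a small holomorphic perturbation coming from a dominating spray whose directions span the positive Levi cone, and glue the local modifications by the gluing lemma for sprays over Cartan pairs. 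Taking the spray tangent to the identity on $K$ and to order $k$ on $\sigma$ preserves the approximation and interpolation requirements.

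The main obstacle, and the origin of the dimension hypotheses, is the \emph{critical case}: raising the boundary image past a critical point $p$ of $\rho$ of index $\lambda$. Crossing $\rho(p)$ attaches a $\lambda$-handle, and the stable manifold of $p$, of real dimension $\lambda$, is the obstruction the moving image must be steered around. Since $g(\bar D)$ has real dimension at most $2d$, a general-position argument applies: because $2d+\lambda\le 2n$ is equivalent to $\lambda\le 2(n-d)$, hypothesis (b) guarantees that after a holomorphic perturbation (still close on $K$ and $k$-jet-fixed on $\sigma$) the image meets the stable manifold in at most finitely many transversal points, which are then removed by further local modifications exploiting the $r\ge d+1$ positive Levi directions; the noncritical push then carries the image across the now harmless level. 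Under the stronger hypothesis (a), $r\ge 2d$, the positive Levi cone is already large enough that a generic $d$-dimensional image can be pushed past any critical point directly, so no bound on $\lambda$ is needed.

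Finally I would assemble the pieces: chaining the noncritical and critical steps gives the full one-step lemma across $[c,c']$, and the outer induction delivers the limit map $f$ with the asserted approximation, interpolation, and divergence. I expect the genuinely delicate points to be (i) the uniform quantitative control of the spray modifications, so that the accumulated perturbations remain summable on compacts while each step still gains a definite amount of $\rho$ on $bD$, and (ii) the transversality bookkeeping at the critical points of $\rho$, which is the true source of the numerical conditions (a) and (b).
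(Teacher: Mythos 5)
This statement is not proved in the paper at all: it is Theorem~1.1 of \cite{DF2010}, quoted verbatim and used as a black box in the proof of Proposition~\ref{prop:FB}. There is therefore no internal proof to measure your attempt against; the only meaningful comparison is with the original argument in the cited reference.

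Measured against that, your outline does track the actual strategy: an induction over sublevel sets of $\rho$ reduced to a one-step boundary-lifting lemma, with the noncritical case handled by local holomorphic perturbations on strongly pseudoconvex bumps glued via the Cartan-pair/spray gluing lemma, and critical levels crossed by general position. But as written it is a roadmap, not a proof, and the two genuinely hard ingredients are only named. First, the local lifting on bumps --- constructing holomorphic sprays that increase $\rho$ by a definite uniform amount using only the $r$ positive Levi directions --- is where the hypothesis $r\ge d+1$ is actually consumed: one needs at each point of $f(bD)$ a positive Levi direction transverse to the $d$-dimensional image already in the noncritical case, not (as you suggest) only at critical levels. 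Second, your mechanism for crossing a critical level is not the one that works: what must be kept off the local stable disc of a critical point of index $\lambda\le 2(n-d)$ is the boundary $f(bD)$, of real dimension $2d-1$, which is possible by general position since $(2d-1)+\lambda<2n$; one then runs the noncritical lifting for a modified exhaustion whose sublevel set engulfs the handle. Your version, in which the full $2d$-dimensional image meets the stable manifold in finitely many transversal points that are then ``removed by further local modifications,'' is not a step you could carry out as stated --- a proper holomorphic image cannot in general be pushed off such intersections by a perturbation localized near $bD$ --- and it is not needed. Finally, note that a Morse function whose Levi form has $r$ positive eigenvalues has all Morse indices $\le 2n-r$, so hypothesis (a) already forces $\lambda\le 2(n-d)$ and is in effect a special case of (b); your claim that under (a) ``no bound on $\lambda$ is needed'' is true only because the bound is automatic.
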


The most important special case of this result, and the one that we shall use here, is when
$\rho\colon Z\to\R$ is an exhaustion function on $Z$; the condition (\ref{eq:proper}) then means that
the map $f\colon D\to Z$ is proper. In this case the proof of Theorem \ref{DFmain} in \cite{DF2010} 
also ensures that for every number $\delta>0$ we can pick $f$ as in the theorem such that
\begin{equation}\label{eq:smalldrop}
	\rho(f(x)) > \rho(f_0(x)) -\delta \quad \forall x\in D.
\end{equation}

\begin{proof}[Proof of Proposition \ref{prop:FB}] 
We may assume that $L=\R^k$ is the standard totally real subspace (\ref{eq:Rk}). Choose a number $R>0$ such that the set $f(E)\subset \C^n$ is contained in the ball $B_R=\{z\in\C^n\colon |z| <R\}$. 
Pick a smooth increasing convex function $h\colon\R\to\R_+$ such that $h(t)=0$ for $t\le 0$ and $h$ is strongly convex and strongly increasing on $t>0$. The nonnegative function on $\C^n$ given by
\begin{equation}\label{eq:rho}
	\rho(z_1,\ldots,z_n)=\sum_{j=1}^k y_j^2 + \sum_{j=k+1}^n |z_j|^2 + h\left(|z|^2-R^2\right) 
\end{equation}
is then a strongly plurisubharmonic exhaustion on $\C^n$ that vanishes precisely on the set $\overline B_R \cap L$.
It is easily seen that $\rho$ has no critical points on $\C^n\setminus (\overline B_R \cap L)=\{\rho>0\}$.

Since the compact set $E \subset X$ is $\cO(X)$-convex and $U\subset X$ is an open set containing $E$, 
we can pick a smoothly bounded strongly pseudoconvex domain $D$ with $E\subset D\Subset U$. 
Since $f(E)\subset B_R\setminus L$, we can ensure by choosing $D$ sufficiently small around $E$ 
that $f(\bar D)\subset B_R\setminus L$, and hence $c:=\min_{\bar D} \rho\circ f > 0$.  

By Theorem \ref{DFmain}, applied with the function $\rho$ (\ref{eq:rho}) on $Z=\C^n$,  
we can approximate the map $f$ uniformly on $E$
by a proper holomorphic map $\tilde f\colon  D \to \C^n$ satisfying 
\begin{equation}\label{eq:lowerbound} 
	\rho\circ \tilde f(x) \ge c/2>0 \quad \forall x\in  D. 
\end{equation}	
(Use case (b) of Theorem \ref{DFmain} and recall that $\rho$ is noncritical outside of its zero set.
Condition (\ref{eq:lowerbound})  follows from (\ref{eq:smalldrop}) applied with $\delta=c/2>0$.) 
Assuming that $\tilde f$ if close enough to $f$ on $E$, we have $\tilde f(E)\subset B_R\setminus L$. 
The image $A: =\tilde f(D)$ is then a closed complex subvariety of $\C^n$ which is disjoint from the set 
$\overline B_R \cap L=\rho^{-1}(0)$. 

Set $K=A\cap \overline B_R$. We claim that $K\cup L$ is polynomially convex. Since $\overline B_R \cup L$ is polynomially convex (see \cite[Theorem 8.1.26]{Stout2007}), we only need to consider points $p\in \overline B_R\setminus (A\cup L)$. Since $A$ is a closed complex subvariety of $\C^n$ and $p\notin A$, there is a function $g\in \cO(\C^n)$ such that $g(p)=1$ and $g|_A=0$. Also, since $p\notin L$, there exists a function $h\in \cO(\C^n)$ satisfying $h(p)=1$ and $|h|<1/2$ on $L$. (We use the Carleman approximation of the function that equals $1$ at $p$ and zero on $L$). The entire function $\xi = gh^N$ $(N\in\N)$ then satisfies $\xi(p)=1$ and $\xi=0$ on $A$, and it also satisfies $|\xi|<1$ on a given compact set $L'\subset L$ provided that the integer $N$ is chosen big enough. Hence $p$ does not belong to the polynomial hull of $K\cup L$, so this set is polynomially convex.

Since $\tilde f(E)\subset K$ by the construction, the existence of a Fatou-Bieberbach domain $\Omega$ satisfying 
$\tilde f(E) \subset \Omega \subset \C^n\setminus L$ (\ref{eq:FB}) now follows from Theorem \ref{th:FB} 
applied to the sets $K$ and $L$.  This completes the proof of Proposition \ref{prop:FB}.
\end{proof}

\smallskip
\textit{Acknowledgements.}
F.\ Forstneri\v c is partially supported by research program P1-0291 and research grant J1-5432 from ARRS, Republic of Slovenia. E.\ F.\ Wold is supported by grant  NFR-209751/F20 from the Norwegian Research Council.
A part of the work on this paper was done while Forstneri\v c was visiting the Mathematics Department of the 
University of Oslo. He wishes to thank this institution for its hospitality and excellent working conditions.

\bibliographystyle{amsplain}

\end{document}